\title{On an efficient induction step with Nklt(X,D)--Notes to Todorov}
\author{Meng Chen}
\address{\rm Institute of Mathematics \& LMNS, Fudan University, Shanghai 200433, China}
\email{mchen@fudan.edu.cn}
\newcommand{\bQ}{{\mathbb Q}}
\newcommand{\bP}{{\mathbb P}}
\newcommand{\roundup}[1]{\lceil{#1}\rceil}
\newcommand{\rounddown}[1]{\lfloor{#1}\rfloor}
\newcommand\vol{\text{\rm vol}}
\newcommand\lrw{\longrightarrow}
\newcommand\rw{\rightarrow}
\newcommand\hrw{\hookrightarrow}
\newcommand\OO{{\mathcal{O}}}
\newcommand\eps{\varepsilon}
\newcommand{\tdr}{\tilde{r}}
\newtheorem{thm}{Theorem}[section]
\newtheorem{lem}[thm]{Lemma}
\newtheorem{cor}[thm]{Corollary}
\newtheorem{prop}[thm]{Proposition}
\theoremstyle{definition}
\theoremstyle{remark}
\begin{document}
\begin{abstract} Applying the effective induction on $\text{Nklt}(X,D)$ developed by Hacon--McKernan  and Takayama, Todorov proved that $\varphi_5$ is birational for projective 3-folds $V$ with $\vol(V)\gg 0$, which was recently improved by Di Biagio in loosing the volume constraint. The observation is that the least efficient induction step can be studied in an alternative way, which allows us to assert Todorov's statement for $\vol(V)>12^3$. The 4-dimensional analog is also given in this note. The idea works well for all dimensions.  
\end{abstract}
\maketitle
\pagestyle{myheadings}
\markboth{\hfill M. Chen\hfill}{\hfill On an efficient induction step with $\text{Nklt}(X,D)$\hfill}
\numberwithin{equation}{section}

\section{\bf Introduction}

We work over the complex number field ${\mathbb C}$.  One of the fundamental questions in birational geometry has been to find an optimal integer $r_n>0$ ($n\geq 3$) such that the pluricanonical map $\varphi_m$ is birational onto the image for all $m\geq r_n$ and for all $n$-dimensional nonsingular projective varieties of general type. It is well-known that $r_1=3$ and, due to Bombieri, $r_2=5$.  For 3-folds, one has $r_3\leq 73$ by Chen--Chen \cite{Ex1,JDG}. For all $n\geq 3$, the remarkable boundedness theorem, i.e. $r_n<+\infty$, was proved separately by Hacon--McKernan \cite{H-M}, Takayama \cite{Tak} and Tsuji.  One may refer to the very nice survey article \cite{HM10} for other boundedness results in birational geometry. 
\medskip

We refer to \cite{H-M, Tak, Joro} for those standard notions of ``volume'', ``lc center'' and ``Nklt(X,D)''.  In this paper we are mainly interested in the explicit boundedness of projective varieties with large volumes.

Utilizing the effective induction on non-klt centers (i.e. $\text{Nklt}(X,D)$) developed in Hacon--McKernan \cite{H-M} and Takayama \cite{Tak}, Todorov \cite[Theorem 1.2]{Joro} proved that $\varphi_5:=\Phi_{|5K_V|}$ is birational for all nonsingular projective 3-folds $V$ with $\vol(V)>4355^3$. Recently Di Biagio \cite[Theorem 1.2]{DB} improved this by loosing the volume constraint, say $\vol(V)>2\cdot 1917^3$. 

In fact, the least efficient induction step of Todorov is when $V$ is birationally fibred by a family of surfaces with small volumes over a curve. Recalling a relevant result in our paper \cite{Bonn} and using the ``canonical restriction inequality'' in Chen--Zuo \cite{CAG}, the mentioned induction step of Todorov can be handled quite well in an alternative way. This allows us to present the following improved statement:

\begin{thm}\label{m1} Let $V$ be a nonsingular projective 3-fold with $\vol(V)>12^3$. Then $\varphi_m$ is birational for all $m\geq 5$.
\end{thm}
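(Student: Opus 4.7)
The plan is to run the effective non-klt induction of Hacon--McKernan, Takayama, and Todorov on a smooth projective 3-fold $V$ with $\vol(V)>12^3$. For two very general points $x_1,x_2\in V$ the goal is to produce, for some $\lambda<4$, a $\bQ$-divisor $D\sim_{\bQ}\lambda K_V$ such that $(V,D)$ is not klt at $x_1$, is klt at $x_2$, and has a minimal non-klt center $Z\ni x_1$ with $x_2\notin Z$. Given such a $D$, Kawamata--Viehweg vanishing applied to $5K_V-K_V-D\sim_{\bQ}(4-\lambda)K_V$ lifts any section of $(5K_V)|_Z$ separating $x_1$ and $x_2$ to a section of $|5K_V|$, reducing the birationality of $\map_5$ to the separation of points on $Z$ by the restricted pluricanonical series.

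The induction then cuts $\dim Z$ from $2$ down to $0$ via tie-breaking. In the generic branches, standard volume estimates on restricted linear series suffice: the hypothesis $\vol(V)>12^3$ yields a divisor in $|mK_V|$ with high multiplicity at $x_1$, and at each reduction the coefficient spent on $D$ stays below $4$ while the residual ample part $(4-\lambda)K_V$ is still large enough to drive the next vanishing step. The threshold $\vol(V)^{1/3}>12$ is the quantitative minimum that keeps this balance intact across the chain of reductions from surface-centers, to curve-centers, to isolated zero-dimensional centers.

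The main obstacle is the branch singled out as the \emph{least efficient} by Todorov: $V$ is birationally fibered over a curve by a family of surfaces $S$ of small canonical volume, so the restricted system on $S$ does not immediately separate two general points of $S$. I would treat this case by the alternative route indicated in the introduction. First, a result from \cite{Bonn} on 3-folds fibered by surfaces with small $\vol(S)$ restricts the structure of such a fibration and gives an effective lower bound on $\vol(V)$ in terms of $\vol(S)$ and the fibration data. Second, the canonical restriction inequality of Chen--Zuo \cite{CAG} provides an effective comparison between $(K_V|_S)^2$ and $\vol(S)$, so that $\vol(V)>12^3$ forces the general fiber $S$ to have invariants large enough that Bombieri's theorem on surfaces of general type guarantees that $|5K_V||_S$ separates two very general points of $S$. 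Merging this with the generic branches yields the birationality of $\map_m$ for all $m\geq 5$. The delicate point is calibrating the constants from these two inputs so that the single threshold $12^3$ is simultaneously sufficient for every branch of the induction.
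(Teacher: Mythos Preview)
Your outline tracks the paper's proof through the generic branches: the dichotomy between Takayama's direct induction (when surface-centers through very general points have volume $\geq 10^2$) and the McKernan fibration picture, with the non-birational-$\nu$ subcase handled via Todorov's cutting lemma, is exactly what the paper does.

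The gap is in the critical fibered subcase. You write that \cite{Bonn} and the Chen--Zuo inequality force the general fiber $S$ to have invariants \emph{large enough} for Bombieri's theorem to apply. This is backwards: in this subcase $\vol(S)<10^2$ by hypothesis, and the mechanism is that the \emph{ratio} $\vol(V)/\vol(S)$ is large. The relevant input from \cite{Bonn} is not a lower bound on $\vol(V)$ but the inequality $K_{V''}\geq(\tau_0-\eta)S$ with $\tau_0=\vol(V)/(3\vol(S))>12^3/300>5$; i.e.\ $K_{V''}\sim_{\bQ}pS+E_p$ with $p>5$. The Chen--Zuo canonical restriction inequality then gives, on a general fiber $F$, $\pi^*(K_{Y_0})|_F\equiv(\tfrac{p}{p+1}-\eps)\theta^*(K_{F_0})+H_\eps$, so that after peeling off two fibers the restriction of $(4-\tfrac{2}{p})\pi^*(K_{Y_0})$ to $F$ still dominates $3\theta^*(K_{F_0})$ plus a nef and big remainder. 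Birationality on $F$ is then not Bombieri's statement about $|5K_F|$ but the finer Lemma~\ref{1}: for any nef and big $\bQ$-divisor $Q$ on a smooth surface $S$ with contraction $\sigma:S\to S_0$, the system $|K_S+3\sigma^*(K_{S_0})+\roundup{Q}|$ is already birational. This requires a Hodge-index argument and a separate treatment of $(1,2)$-surfaces, and it is what pins the threshold at $12^3$. An appeal to Bombieri alone does not close this branch, since the restricted system is $|K_F+\roundup{L}|$ with $L$ numerically only about $3\theta^*(K_{F_0})$, not $|5K_F|$.
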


It is worthwhile to realize the idea in higher dimensions. {}First of all we note that the MMP works well for projective varieties of general type due to, for instance, Birkar--Cascini--Hacon--McKernan \cite{BCHM}, Hacon--McKernan \cite{H-M-M} and Siu \cite{Siu}. When we speak of ``minimal models'' or ``minimal n-folds'', it should always be understood that the variety has $\bQ$-factorial terminal singularities.
\medskip

\noindent{\bf Definition}. For any integer $n\geq 2$, define the positive integer $\tilde{r}_n$ to be the minimum satisfying the following properties:
\begin{quote}{\em for all minimal projective n-folds $X_0$ of general type, for all birational morphisms $\nu: X'\rw X_0$ with $X'$ nonsingular and for all nef and big $\bQ$-divisors $R$ on $X'$, the linear system
$$|K_{X'}+\roundup{(m-2)\nu^*(K_{X_0})+R}|$$
gives a birational map onto the image for all $m\geq \tilde{r}_n$.}\end{quote}
\medskip

\noindent{\bf Remark}. We can see, later on, in Lemma \ref{1} and Lemma \ref{3+} that $\tdr_n<+\infty$ for all $n\geq 2$, which is essentially an induction on the dimension applying Kawamata--Viehweg vanishing and the boundedness theorem of Hacon--McKernan \cite{H-M}, Takayama \cite{Tak} and Tsuji. (In fact, it is not hard to see from Chen--Chen \cite{JDG} that $\tdr_3\leq 73$.)

\medskip

In the second part we shall prove the following:

\begin{thm}\label{m2} Let $Y$ be a nonsingular projective 4-fold. Assume $\vol(Y)\gg 0$, say $\vol(Y)>\frac{\tdr_3-1}{6}\cdot 12^4$. Then $|mK_Y|$ gives a birational map for all $m\geq \tdr_3$.
\end{thm}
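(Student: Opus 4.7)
The plan is to run the Hacon--McKernan/Takayama effective induction on non-klt centers in parallel with the proof of Theorem \ref{m1}, and to close out the induction at the $3$-dimensional level by directly invoking the definition of $\tilde{r}_3$. First, one passes to a minimal model $Y_0$ of $Y$ via \cite{BCHM} and takes a common resolution $\pi \colon Y' \to Y_0$; set $M := \pi^* K_{Y_0}$, a nef and big $\bQ$-divisor with $M^4 = \vol(Y_0) = \vol(Y)$. Fix two very general points $p_1, p_2 \in Y'$; the goal is to show that $|K_{Y'} + \lceil (m-1) M \rceil| \subseteq |m K_{Y'}|$ separates $p_1$ from $p_2$ for every $m \geq \tilde{r}_3$.

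The standard Hacon--McKernan multiplicity/tie-breaking machinery produces, under the hypothesis $\vol(Y) > \frac{\tilde{r}_3 - 1}{6}\cdot 12^4$, an effective $\bQ$-divisor $D \sim_{\bQ} \mu M$ such that $(Y', D)$ is log canonical but not klt at $p_1$, with a unique minimal lc center $Z$ containing both $p_1$ and $p_2$, and with $\mu$ small enough that $(m-1)M - D$ remains nef and big. Nadel vanishing then yields the surjection
\[
H^0\bigl(Y', \omega_{Y'} \otimes \OO_{Y'}(\lceil(m-1)M - D\rceil)\bigr) \twoheadrightarrow H^0\bigl(Z, \text{restricted system}\bigr),
\]
so it suffices to separate the images of $p_1$ and $p_2$ on $Z$. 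One then argues by $\dim Z$: when $\dim Z = 0$ the separation is immediate; when $\dim Z \in \{1,2\}$ one appeals to the classical birationality criteria on curves and surfaces of general type, using that $M|_Z$ has enough volume (this is the source of the factor $12$ in the bound).

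The critical case is $\dim Z = 3$, which is exactly the situation the definition of $\tilde{r}_3$ is designed for. After adjunction, the restricted linear system on $Z$ takes the shape $|K_Z + \lceil(m-2) M|_Z + R\rceil|$ with $R$ a nef and big $\bQ$-divisor; by passing to a minimal model $Z_0$ of $Z$ and absorbing the discrepancy $K_Z - \nu^* K_{Z_0}$ together with any residual error into $R$ (using the canonical restriction inequality of \cite{CAG}), the definition of $\tilde{r}_3$ gives birationality on $Z$ directly for every $m \geq \tilde{r}_3$. The main obstacle I expect is the sub-case in which this $3$-dimensional $Z$ is itself birationally fibred over a curve by surfaces of small volume, i.e.\ the ``least efficient'' step that needed separate treatment in the proof of Theorem \ref{m1}; here the argument has to combine the result of \cite{Bonn} with the canonical restriction inequality in \cite{CAG}, exactly as in the three-fold setting, and it is this case that dictates the precise numerical form $\frac{\tilde{r}_3 - 1}{6}\cdot 12^4$ of the volume bound.
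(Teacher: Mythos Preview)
Your outline has a genuine gap in the $\dim Z=3$ case, and it stems from a misreading of where $\tilde r_3$ actually enters the argument.

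When $Z$ is merely a minimal lc center of $(Y',D)$, Nadel vanishing plus subadjunction hands you on $Z$ a system of the shape $|K_Z+\lceil R'\rceil|$ with $R'\equiv((m-1)M-D)|_Z$ nef and big. To invoke the definition of $\tilde r_3$ you need this to dominate $(m-2)\nu^*(K_{Z_0})+(\text{nef and big})$, i.e.\ you need $M|_Z=\pi^*(K_{Y_0})|_Z\ge \nu^*(K_{Z_0})$ up to something effective. The canonical restriction inequality of \cite{CAG} gives exactly this, but only when $Z$ is a \emph{fiber of a morphism} from $Y'$ to a curve; for an arbitrary lc center there is no such comparison, and your ``absorb the discrepancy into $R$'' step has nothing to absorb it into. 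So the direct appeal to $\tilde r_3$ on $Z$ does not go through.

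The paper's route is different and explains why the numerics come out as they do. One splits the $3$-dimensional-center situation in two. If the $3$-fold centers through very general points have \emph{large} volume ($\alpha_1>3\sqrt[3]{4}$), one does \emph{not} use $\tilde r_3$ at all: Takayama's induction \cite[Proposition~5.3]{Tak} cuts the center down further, and one checks $a_4<\tilde r_3-1$ numerically (this is where $\tilde r_3\ge 27$ is used). If instead the $3$-fold centers have \emph{small} volume, McKernan's lemma \cite{JM} produces a fibration $f\colon Y''\to B$ of $Y$ itself over a curve whose general fiber $X$ is such a $3$-fold; now $X$ is a genuine fiber, the canonical restriction inequality applies, and Proposition~\ref{4} (the $n$-dimensional analogue of Proposition~\ref{3}) gives birationality of $|mK_Y|$ for $m\ge\tilde r_3$ once $\vol(Y)/(4\vol(X))>2\tilde r_3-2$. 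The constant $\frac{\tilde r_3-1}{6}\cdot 12^4$ comes from this last ratio, not from any fibration internal to $Z$. Your closing paragraph places the ``least efficient'' step inside $Z$ (i.e.\ $Z$ fibred by surfaces), but the actual bottleneck is $Y$ fibred by $3$-folds.
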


A difficulty in high dimensions is that we do not know any practical lower bound for $\tdr_n$ ($n\geq 4$). We need to set the following:
\medskip

\noindent{\bf Notation.} For all $i>0$, the real number $e_i>0$ denotes the optimal constant so that $\vol(Z)>e_i^i$ for all $i$-folds $Z$ of general type. Set 
$$R_n:=\text{max}\{2\prod_{i=1}^{n-2}(\frac{i\sqrt[i]{2}}{e_i}+1)-1, \tdr_{n-1}\}.$$
\medskip

Our last result is the following:
\begin{cor}\label{m3} For all $n\geq 5$, $C_n$ are computable positive constants. Let $Y$ be a nonsingular projective n-fold with $\vol(Y)>C_n$. Then $|mK_Y|$ gives a birational map for all $m\geq R_n$. 
\end{cor}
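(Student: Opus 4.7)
\textbf{Proof sketch of Corollary \ref{m3}.} The plan is to iterate the Hacon--McKernan--Takayama non-klt induction used for Theorems \ref{m1}--\ref{m2}, building a chain of lc centers of strictly decreasing dimension, each being a variety of general type whose $i$-dimensional volume is bounded below by $e_i^{\,i}$ (the defining property of $e_i$). Pass to a minimal model $X_0$ via MMP, take a log resolution $\pi\colon X\rw X_0$, and fix two very general closed points $x_1,x_2\in X$; the goal is to separate them by sections of $|mK_X|$ for every $m\geq R_n$.

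The inductive step runs as follows. Suppose at stage $k$ we have extracted a pure-dimensional lc center $Z\ni x_1$ of dimension $i=n-k$ with $\vol(Z)>e_i^{\,i}$. By the classical Angehrn--Siu/Kawamata volume-to-multiplicity argument applied on $Z$, for any $\varepsilon>0$ there is an effective $\bQ$-divisor on $Z$ numerically equivalent to $\bigl(\frac{i\sqrt[i]{2}}{e_i}+\varepsilon\bigr)K_Z$ having multiplicity $>i$ at $x_1$; tie-breaking produces a proper lc sub-center $Z'\subsetneq Z$ through $x_1$. Using subadjunction together with the ``canonical restriction inequality'' of Chen--Zuo, this divisor is pulled back to $X$ as a non-klt contribution, and one verifies that $Z'$ is itself of general type with $\vol(Z')>e_{\dim Z'}^{\,\dim Z'}$, so the recursion continues. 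The accumulated coefficient of $\pi^*K_{X_0}$ in the total non-klt divisor after running through dimensions $n-1,n-2,\ldots,1$ is at most $\prod_{i=1}^{n-2}\bigl(\frac{i\sqrt[i]{2}}{e_i}+1\bigr)$, where each summand $+1$ accounts for the tie-breaking shift.

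Two modifications of Todorov's framework then close the argument. Running the descent simultaneously at both $x_1$ and $x_2$ (equivalently, symmetrizing the tie-breaking divisor) doubles the final coefficient, which gives the factor $2$ in $R_n$. Whenever the descent first hits a divisorial lc center (i.e.\ of codimension one inside the ambient resolution), the definition of $\tdr_{n-1}$ applied to that center terminates the argument immediately, giving the $\max$ with $\tdr_{n-1}$ in $R_n$. Lifting the sections from the minimal lc center back to $X$ is by Kawamata--Viehweg vanishing applied to $K_X+\roundup{(m-1)\pi^*K_{X_0}-(\text{total Nklt part})}$, and the required nefness/bigness holds as soon as $m-1\geq 2\prod_{i=1}^{n-2}\bigl(\frac{i\sqrt[i]{2}}{e_i}+1\bigr)-1$. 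The constant $C_n$ is then an explicit function of $\tdr_{n-1}$ and $e_1,\ldots,e_{n-2}$, chosen so that the very first volume-to-multiplicity inequality on $X$ itself has enough slack.

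The main obstacle is the inductive verification that each successive lc sub-center $Z'$ is still of general type with the expected volume bound $\vol(Z')>e_{\dim Z'}^{\,\dim Z'}$: positivity of $K_X|_{Z'}$ does not by itself imply this. One must combine subadjunction (to compare $K_X|_{Z'}$ with $K_{Z'}$ modulo a nef error) with the Chen--Zuo canonical restriction inequality in order to propagate bigness at every descent step without eroding the volume bounds. Once this book-keeping is set up, tracking the coefficients through the recursion yields the explicit $C_n$ and the asserted birationality of $|mK_Y|$ for all $m\geq R_n$.
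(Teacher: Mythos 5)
Your proposal follows the right broad strategy — the Hacon--McKernan--Takayama non-klt descent with the universal volume constants $e_i$, doubling for two points, and Nadel/Kawamata--Viehweg vanishing to lift sections — and the paper itself only sketches the proof by referring to Theorems \ref{m1} and \ref{m2} and to Di Biagio's calculation, so some latitude is expected. But there are two genuine gaps in the way you handle the top-dimensional step, which is exactly the step the paper singles out as ``the least efficient.''

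First, your accumulated coefficient $\prod_{i=1}^{n-2}\bigl(\tfrac{i\sqrt[i]{2}}{e_i}+1\bigr)$ runs only over $i\leq n-2$, yet your descent as described passes through lc centers of every dimension from $n-1$ down to $1$, which would introduce an extra factor involving $e_{n-1}$. The paper deliberately avoids $e_{n-1}$ because for high-dimensional fibers no useful lower bound on $\vol$ is available; that is precisely why $R_n$ is a maximum of two terms rather than a single product up to $i=n-1$. The proof must therefore split cases at the $(n-1)$-dimensional stage: if the $(n-1)$-dimensional lc centers through very general points all have large volume, Takayama's induction closes the loop with the product ending at $n-2$; if they have uniformly small volume, one does not run the descent further but instead invokes McKernan's covering-family result to produce a dominant morphism and a genuine fibration $f\colon Y''\rw B$ over a curve, and then applies Proposition \ref{4} verbatim. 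You never mention McKernan's theorem, and your version never produces a fibration.

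Second, your explanation for the $\max$ with $\tdr_{n-1}$ — ``whenever the descent first hits a divisorial lc center, the definition of $\tdr_{n-1}$ applied to that center terminates the argument immediately'' — is not correct as stated. The definition of $\tdr_{n-1}$ involves $\roundup{(m-2)\nu^*(K_{X_0})+R}$ for a \emph{minimal model} $X_0$ of the $(n-1)$-fold; to invoke it on a divisorial lc center $Z$ you must first compare $\pi^*K_{Y_0}|_Z$ with $\nu^*K_{Z_0}$, and that is exactly the canonical restriction inequality (\ref{e}), which in Proposition \ref{4} is established for \emph{fibers of a fibration over a curve} (using semipositivity of $f_*\omega_{Y/B}^{\bullet}$ when $g(B)=0$, or Chen--Zuo's argument via rational chain connectedness when $g(B)>0$). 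A general divisorial lc center need not be a fiber, so you cannot ``apply the definition immediately.'' Restoring McKernan's fibration as the bridge between ``small-volume divisorial center through a very general point'' and ``fiber of a fibration over a curve'' is what makes this step legitimate, and is what the paper's Proposition \ref{4} is built to supply.

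A smaller point: the worry you raise at the end — whether subadjunction preserves general type and volume of the successive centers $Z'$ — is actually a non-issue for the lower-dimensional steps, since $\vol(Z')>e_{\dim Z'}^{\dim Z'}$ holds for every variety of general type by the very definition of $e_i$; the real difficulty is the $(n-1)$-dimensional step discussed above, not subadjunction book-keeping.
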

\bigskip

Throughout we are in favor of the following symbols:
\begin{itemize}
\item``$\sim$'' denotes linear equivalence or ${\mathbb Q}$-linear equivalence; 
\item``$\equiv$'' denotes numerical equivalence; 
\item``$A\geq  B$'', for $\bQ$-divisors $A$ and $B$, means that $A-B$ is $\bQ$-linearly equivalent to an effective $\bQ$-divisor.
\item ``$|M_1|\succeq |M_2|$'' means, for linear systems $|M_1|$ and $|M_2|$,  
$|M_1|=|M_2|+(\text{fixed effective divisor}).$
\end{itemize}
\medskip

I would like to thank the anonymous referee for his(her) valuable suggestions and comments.

\section{\bf Proof of main statements}

We start with the following:

\begin{lem}\label{1} Let $S$ be a nonsingular projective surface of general type and $\sigma: S\rw S_0$ the birational contraction onto the minimal model $S_0$. Let $Q$ be any nef and big $\bQ$-divisor on $S$. Then the linear system $|K_S+3\sigma^*(K_{S_0})+\roundup{Q}|$ gives a birational map.
\end{lem}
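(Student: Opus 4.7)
The plan is to show that $|K_S+3\sigma^*(K_{S_0})+\roundup{Q}|$ separates two general points $x_1,x_2\in S$ (the separation of tangent directions at a general point is handled analogously). Set $L:=3\sigma^*(K_{S_0})+Q$, a nef and big $\bQ$-divisor, so that $\roundup{L}=3\sigma^*(K_{S_0})+\roundup{Q}$ since $K_{S_0}$ is Cartier on the smooth minimal surface $S_0$. The strategy is the standard ``isolated non-klt center plus Kawamata--Viehweg vanishing'' argument underlying the Hacon--McKernan and Takayama induction recalled in the Introduction.

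First, the key intersection estimate. Since $S_0$ is a minimal surface of general type, $K_{S_0}^2\geq 1$. Combining this with $\sigma^*(K_{S_0})\cdot Q\geq 0$ (both classes nef) and $Q^2>0$ (bigness of $Q$),
\[
L^2=9K_{S_0}^2+6\sigma^*(K_{S_0})\cdot Q+Q^2>9>8.
\]
The strict inequality $L^2>8$ is exactly what the Reider-type cutting on a smooth surface demands: given two general $x_1,x_2\in S$, one produces an effective $\bQ$-divisor $D\sim cL$ with $c<1$ and $\text{mult}_{x_i}(D)>2$ for $i=1,2$, and then by a tie-breaking perturbation arranges that $(S,D)$ has $\{x_1\}$ as an isolated log canonical center while $x_2\in\text{Nklt}(S,D)$. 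The general-position hypothesis on $x_1,x_2$ lets one avoid the finitely many curves $C$ for which $L\cdot C=0$.

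Next, apply Kawamata--Viehweg (Nadel) vanishing. Writing
\[
(K_S+\roundup{L})-(K_S+D)=\roundup{L}-D\sim (1-c)L+(\roundup{L}-L),
\]
the right-hand side is big: $(1-c)L$ is nef and big and $\roundup{L}-L$ is an effective $\bQ$-divisor with coefficients in $[0,1)$. After passing to a log resolution of $(S,D+L)$ with SNC support, Nadel vanishing gives
\[
H^1\bigl(S,\mathcal{J}(S,D)\otimes\OO_S(K_S+\roundup{L})\bigr)=0,
\]
whence a surjection $H^0(K_S+\roundup{L})\twoheadrightarrow H^0\bigl(\OO_{\{x_1\}}(K_S+\roundup{L})\bigr)$. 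Since $x_2$ lies in the zero locus of $\mathcal{J}(S,D)$, any section realising a nonzero value at $x_1$ vanishes at $x_2$, giving the desired separation.

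The main obstacle I anticipate is the middle step, namely producing the isolated non-klt center when $L$ is only a $\bQ$-divisor: one must check that after absorbing the fractional part $\roundup{L}-L$ into the boundary and log-resolving, the nef-and-big piece driving the vanishing survives. This is handled by carrying out the cutting argument upstairs on a log resolution of $(S,Q)$ and then pushing forward, which also justifies the clean identity $\roundup{L}=3\sigma^*(K_{S_0})+\roundup{Q}$ used throughout.
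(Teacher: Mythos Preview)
There is a genuine gap in the second paragraph: a tie-breaking perturbation does \emph{not} force the log canonical center at $x_1$ to be zero-dimensional. Tie-breaking only arranges that the minimal lc center through $x_1$ is unique; if that center is a curve $C$ through $x_1$, no small perturbation turns it into the point $\{x_1\}$. Cutting the dimension down requires adding a further multiple $c'L$ with $c'$ controlled by $1/(L\cdot C)$, so one needs a uniform positive lower bound on $L\cdot C$ for curves $C$ through a very general point. The inequality $L^2>8$ by itself gives no such bound, since $L$ is only nef and big and $L\cdot C$ can be arbitrarily small even for curves that move. Your remark about avoiding the finitely many curves with $L\cdot C=0$ misses the point: positivity of $L\cdot C$ is not enough, a definite lower bound is needed.

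That lower bound is precisely the content of the paper's proof. The author shows that for any curve $\hat{C}$ through a very general point one has $\sigma^*(K_{S_0})\cdot\hat{C}\geq 2$, hence $\mathcal{L}\cdot\hat{C}>6$, and then invokes a Reider-type criterion (which packages the same vanishing argument you outline, but with both the self-intersection and the curve-intersection hypotheses in place). Establishing $\sigma^*(K_{S_0})\cdot\hat{C}\geq 2$ is a nontrivial case analysis via the Hodge index theorem and the classification of surfaces with $K_{S_0}^2=1$: the torsion of the Picard group rules out the $(1,0)$ and $(1,1)$ cases, while the argument breaks down for $(1,2)$ surfaces, which must be treated separately. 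None of this appears in your proposal, so as written it does not prove the lemma.
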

\begin{proof} Write ${\mathcal L}:=3\sigma^*(K_{S_0})+Q$. Clearly ${\mathcal L}$ is nef and ${\mathcal L}^2>9$. 

Case 1. $S$ is not a (1,2) surface.  For any irreducible and reduced curve $\hat{C}\subset S$, passing through a very general point $P\in S$, it is sufficient to prove $(\sigma^*(K_{S_0})\cdot \hat{C})\geq 2$ and so that $({\mathcal L}\cdot \hat{C})>6$. Then the statement follows from either \cite[Lemma 2.6]{Bonn} or \cite[Proposition 4]{Masek}. In fact, $(\sigma^*(K_{S_0})\cdot \hat{C})=(K_{S_0}\cdot C)$ where $C:=\sigma_*(\hat{C})$.  We may verify this on $S_0$ assuming $C$ to be a curve in $S_0$ passing through a very general point of $S_0$. Note that $C$ is of general type. Thus $(K_{S_0}\cdot C)+C^2\geq 2$.  Suppose to the contrary that $(K_{S_0}\cdot C)\leq 1$. Then $C^2\geq 1$.  By the Hodge Index Theorem, one sees $K_{S_0}^2=C^2=1$ and $K_{S_0}\equiv C$.  The surface theory implies that $S$ is either a $(1,1)$ surface or a $(1,0)$ surface. If $(K_{S_0}^2, p_g(S))=(1,0)$, then the torsion element $\theta:=K_{S_0}-C$ is of order $\leq 5$ and $h^0(S_0, C)=1$. Thus there are at most finite number of such curves on $S_0$ since it is determined by the group $\text{Tor}(S)$ with $|\text{Tor}(S)|\leq 5$. By the choice of $C$, this is impossible. If $(K_{S_0}^2, p_g(S))=(1,1)$, then $K_{S_0}\sim C$ since $\text{Tor}(S_0)=0$ by Bombieri \cite[Theorem 15]{Bom} and thus $C$ is the unique canonical curve of $S_0$, which is impossible either by the choice of $C$. In a word, we have $(K_{S_0}\cdot C)\geq 2$. 


Case 2. $S$ is a (1,2) surface.  The statement follows from \cite[Lemma 1.3, Lemma 1.4]{JPAA}.
\end{proof}

\begin{prop}\label{3} Let $Y$ be a nonsingular projective 3-fold of general type, $B$ a smooth projective curve and $h: Y\rw B$ be a fibration. Denote by $F$ a general fiber of $h$. Assume $K_Y\sim_{\bQ} pF+E_p$ for some rational number  $p>5$ and an effective $\bQ$-divisor $E_p$. Then $|mK_Y|$ gives a birational map for all $m\geq 5$. 
\end{prop}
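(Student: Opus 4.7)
The plan is to reduce the birationality of $|mK_Y|$ to the surface-level birationality on a general fiber, using the fibration $h$ to separate points transversely. After replacing $Y$ by a suitable smooth birational model (which does not affect the birationality of $|mK_Y|$), I may assume that $h$ is a morphism, that the union of $\mathrm{Supp}(E_p)$ with a general fiber $F$ has simple normal crossing support, and that there is a birational contraction $\pi: Y \to Y_0$ onto a minimal model with $K_Y = \pi^*K_{Y_0} + E_\pi$ for an effective exceptional divisor $E_\pi$. A general fiber $F$ is then a smooth projective surface of general type with $K_Y|_F = K_F$ by adjunction, and the birationality criterion splits into two cases.

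Case (a), two general points lying on distinct fibers of $h$: since $K_Y \sim_{\bQ} pF + E_p$ with $p>5$, for $m \geq 5$ one has $mK_Y \sim_{\bQ} mpF + mE_p$ with $mp>25$. Hence $|mK_Y| \succeq h^*|\lfloor mp\rfloor\cdot b|$ (with a fixed effective part involving $\lfloor mE_p\rfloor$), and the linear system on the curve $B$ is base-point-free on a nonempty open set and separates general points of $B$, so the two fibers are distinguished.

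Case (b), two general points $x_1,x_2$ on a common general fiber $F_0$: here I would show that the restriction map $H^0(Y, mK_Y) \to H^0(F_0, mK_{F_0})$ is surjective. From the exact sequence
$$0 \to \OO_Y(mK_Y-F_0) \to \OO_Y(mK_Y) \to \OO_{F_0}(mK_{F_0}) \to 0,$$
this reduces to $H^1(Y, mK_Y-F_0)=0$, which I would obtain from the decomposition
$$mK_Y - F_0 \sim_{\bQ} K_Y + \epsilon\, \pi^*K_{Y_0} + \bigl((m-1-\epsilon)p-1\bigr)F_0 + (m-1-\epsilon)E_p + \epsilon E_\pi,$$
valid for a sufficiently small rational $\epsilon>0$; this is feasible precisely because $p>5$ gives $(m-1)p-1>19$ for $m \geq 5$, leaving room for the perturbation. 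A standard application of Kawamata--Viehweg vanishing then yields the desired $H^1=0$, and Bombieri's theorem ensures that $|mK_{F_0}|$ is birational for $m \geq 5$ (alternatively one invokes Lemma \ref{1} on the minimal model of $F_0$), separating $x_1$ from $x_2$.

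The principal technical hurdle is to arrange the Kawamata--Viehweg step of case (b) to satisfy its precise hypotheses: the nef-big piece $\epsilon\pi^*K_{Y_0}$ must be genuinely positive, while the residual effective $\bQ$-divisor must have SNC support with coefficients compatible with integral rounding. The strict inequality $p>5$ is essential for both, since otherwise the coefficient $(m-1-\epsilon)p-1$ would risk becoming non-positive and the nef-big perturbation would collapse.
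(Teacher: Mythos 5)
Your overall plan---restrict to a general fiber, use a vanishing theorem to lift sections, and invoke a surface-level criterion---is in the spirit of the paper's proof, but both of your two cases have genuine gaps. In Case (a), the argument that $h^*|\lfloor mp\rfloor b|$ separates general points of $B$ is false when $g(B)$ is large: $\lfloor mp\rfloor$ is a fixed number once $m$ and $p$ are fixed, whereas nothing in the hypothesis bounds $g(B)$, and on a high-genus curve a divisor of bounded degree supported at a single point is neither base-point-free nor does it separate general points. The paper sidesteps this entirely: it applies Kawamata--Viehweg to $K_Y+\roundup{K_{4,p}}$ with \emph{two} generic fibers $F_1,F_2$ subtracted, observing that $K_{4,p}-F_1-F_2\equiv(4-\tfrac2p)\pi^*K_{Y_0}$ is nef and big; the resulting surjection onto $H^0(F_1,-)\oplus H^0(F_2,-)$ separates the two fibers automatically, with no reference to $B$.

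In Case (b), your proposed decomposition $(m-1)K_Y-F_0\equiv \epsilon\pi^*K_{Y_0}+\Delta$ does not meet the Kawamata--Viehweg hypothesis $\rounddown{\Delta}=0$: the coefficient of $F_0$ in $\Delta$ is $(m-1-\epsilon)p-1>18$, and $(m-1-\epsilon)E_p$ likewise has round-down nonzero for generic $E_p$. You cannot simply push $\rounddown{\Delta}$ into the nef-and-big summand, because the components of $E_p$ (an arbitrary effective $\bQ$-divisor) are not nef, so $\epsilon\pi^*K_{Y_0}+\rounddown{\Delta}$ need not be nef. Moreover the surjection you aim for, onto the \emph{full} $H^0(F_0,mK_{F_0})$, is stronger than anything the paper obtains---the paper only gets surjectivity onto a sub-linear-system $|K_{F_i}+\roundup{L_{4,i}}|$ with $L_{4,i}\equiv(4-\tfrac2p)\pi^*K_{Y_0}|_{F_i}\leq(4-\tfrac2p)K_{F_i}$, and that is precisely why it cannot quote Bombieri but instead needs the Chen--Zuo restriction inequality $\pi^*(K_{Y_0})|_F\equiv(\tfrac{p}{p+1}-\eps)\theta^*(K_{F_0})+H_\eps$ together with Lemma~\ref{1} to conclude that $K_S+3\sigma^*K_{S_0}+\roundup{\text{nef big}}$ is birational. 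The key structural point you are missing is that the paper never subtracts $F_0$ from $mK_Y$ directly; it works with $\pi^*K_{Y_0}\sim_{\bQ}pF+E_p'$ and subtracts the small multiple $\tfrac2p E_p'$, so that the divisor whose round-up is taken is itself nef and big with genuinely fractional SNC boundary.
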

\begin{proof} Take a minimal model $Y_0$ of $Y$. Modulo birational modifications over $Y$, we may assume for simplicity that there is a birational morphism $\pi:Y\rw Y_0$. Take a sufficiently large and divisible integer $m>0$ such that 
\begin{itemize}
\item[(i)] $mK_{Y_0}$ is a Cartier divisor;
\item[(ii)] both $|mK_{Y_0}|$ and $|pmF|$ are base point free;
\item[(iii)] $mK_Y\sim_{\mathbb Z}mpF+mE_p$;
\item[(iv)] the support of the union of $E_p$ and all those exceptional divisors of $\pi$ is simple normal crossing.
\end{itemize}
Then one has $pmF\leq m\pi^*(K_{Y_0})\leq mK_Y$. By Chen--Zuo \cite[Lemma 3.3, Lemma 3.4]{CAG}, giving any small rational number $\eps>0$, one has
\begin{equation}\label{ie1}\pi^*(K_{Y_0})|_F\equiv (\frac{p}{p+1}-\eps)\theta^*(K_{F_0})+H_{\eps}\end{equation}
for certain effective $\bQ$-divisor $H_{\eps}$ on $F$ where $\theta: F\rw F_0$ is the contraction onto the minimal model. Write $\pi^*(K_{Y_0})\sim_{\bQ}pF+E_p'$ where $E_p'$ is another effective $\bQ$-divisor.  By assumption the support of $E_p'$ is simple normal crossing.

Pick two distinct general fibers $F_1$, $F_2$ of $h$. Both $F_1$ and $F_2$ are known to be nonsingular projective surfaces of general type. Since 
$$4\pi^*(K_{Y_0})-F_1-F_2-\frac{2}{p}E_p'\equiv (4-\frac{2}{p})\pi^*(K_{Y_0})$$ is nef and big, Kawamata--Viehweg vanishing (\cite{KV,VV}) gives the surjective map
\begin{equation}\label{1.1}
\begin{array}{rl}
&H^0(K_Y+\roundup{K_{4,p}})\\
\rw&
H^0(F_1, K_{F_1}+\roundup{K_{4,p}-F_1}|_{F_1})
\oplus H^0(F_2, K_{F_2}+\roundup{K_{4,p}-F_2}|_{F_2})
\end{array}\end{equation}
where $K_{4,p}:=4\pi^*(K_{Y_0})-\frac{2}{p}E_p'$.
Observing that
$$\roundup{4\pi^*(K_{Y_0})-\frac{2}{p}E_p'-F_i}|_{F_i}\geq 
\roundup{L_{4,i}}$$
where $$L_{4,i}:=(4\pi^*(K_{Y_0})-\frac{2}{p}E_p'-F_i)|_{F_i}\equiv (4-\frac{2}{p})\pi^*(K_{Y_0})|_{F_i}$$ for $i=1,2$,  we have
$$|5K_Y||_{F_i}\succeq |K_Y+\roundup{4\pi^*(K_{Y_0})-\frac{2}{p}E_p'}||_{F_i}\succeq |K_{F_i}+\roundup{L_{4,i}}|.$$
Due to the surjectivity of (\ref{1.1}), it is sufficient to show that $|K_{F_i}+\roundup{L_{4,i}}|$ gives a birational map for $i=1,2$. 

Note that $\frac{p}{p+1}>\frac{5}{6}$ whenever $p>5$. 
Take a small rational number $\eps_0>0$ such that $\delta_0:=(4-\frac{2}{p})(\frac{p}{p+1}-\eps_0)-3>0$. Then we have
$$L_{4,i}\equiv (3+\delta_0)\theta_i^*(K_{F_{i,0}})+(4-\frac{2}{p})H_{\eps_{0},i}$$
for an effective $\bQ$-divisor $H_{\eps_{0},i}$ on $F_i$ where $\theta_i:F_i\rw F_{i,0}$ is the contraction onto its minimal model for $i=1,2$.

Now since 
$$L_{4,i}-(4-\frac{2}{p})H_{\eps_{0},i}\equiv 3\theta_i^*(K_{F_{i,0}})+\delta_0\theta_i^*(K_{F_{i,0}}),$$
Lemma \ref{1} implies that 
$$|K_{F_i}+\roundup{L_{4,i}-(4-\frac{2}{p})H_{\eps_{0},i}}|$$
gives a birational map and so does $|K_{F_i}+\roundup{L_{4,i}}|$ for $i=1,2$. 

Since $F_1$ and $F_2$ are general fibers of $h$, $|5K_Y|$ gives a birational map. So we conclude the statement.\end{proof}

Now we are prepared to prove Theorem \ref{m1}. The outline of the proof follows that of Todorov \cite[Section 4]{Joro} except that the last step (i.e. {\bf Subcase 2.2}) is implemented by an alternative argument.  

\begin{proof}[{\bf Proof of Theorem \ref{m1}}]
Assume $\vol(V)>\alpha_0^3$ for some rational number $\alpha_0>0$. For any very small rational number $\eps>0$, take a birational modification $\mu:V'\rw V$ such that
there is a decomposition 
$$\mu^*(K_V)\sim_{\bQ} A+E_{\eps}$$ for some ample $\bQ$-divisor $A$ and some effective $\bQ$-divisor $E_{\eps}$  
and that the decomposition satisfies the properties of Takayama \cite[Theorem 3.1]{Tak}. (In particular, $\vol(A)\approx \vol(V)$, but $\vol(A)<\vol(V)$.) 

Take two different points $x_1, x_2\in V'$ at very general positions. Then there is a divisor $D\sim_{\bQ}\lambda A$ with $\lambda<\frac{3\sqrt[3]{2}}{(1-\eps)\alpha_0}$ such that $x_i\in \text{Nklt}(V',D)$ for $i=1,2$. 

Suppose the volume of those surfaces passing through very general points of $V'$ has the lower bound $\alpha_1^2$. Of course, $\alpha_1^2\geq 1$ and the volume of curves passing through very general points of $V'$ has the lower bound $\alpha_2\geq 2$. \medskip

{\it Case 1.} Assume $\alpha_1\geq 10$. According to Takayama \cite[Proposition 5.3]{Tak}, there is a constant $a_3<s_3+\frac{t_3}{\alpha_0}$  and an effective $\bQ$-divisor $D_3\sim_{\bQ} a_3A$ such that $x_1,x_2\in \text{Nklt}(V',D_3)$ and that at least one point $x_i$ is isolated in $\text{Nklt}(V',D_3)$. Thus Nadel vanishing implies that $|(\rounddown{a_3}+2)K_{V'}|$ gives a birational map. Unfortunately it is not enough to get the birationality of $|5K_V|$ whenever $\alpha_1$ is smaller. In fact, according to Takayama, one may take{\small
\begin{eqnarray*}
&&s_3=\frac{2}{1-\eps}+(1+\frac{1}{1-\eps})\cdot \frac{4\sqrt{2}}{(1-\eps)\alpha_1}+2\eps(1+\frac{1}{1-\eps})(1+\frac{\sqrt{2}}{(1-\eps)\alpha_1}),\\
&&t_3=(1+\frac{1}{1-\eps})(1+\frac{2\sqrt{2}}{(1-\eps)\alpha_1})\cdot \frac{3\sqrt[3]{2}}{1-\eps}.
\end{eqnarray*}
}By taking a very small $\eps$, while under the premise of $\alpha_0>12$ and $\alpha_1\geq 10$, one can easily verify $a_3<4$, which means $\varphi_5$ is birational.
\medskip

{\it Case 2.} Contrary to Case 1, we may assume $\text{Nklt}(V',D)$ contains an irreducible component which, passing through a very general point of $V'$, is a surface of volume $<10^2$, i.e. $\alpha_1<10$. As already realized by McKernan \cite{JM} and in Todorov \cite[p1328]{Joro}, there are two dominant morphisms $\nu:V''\rw V'$ and $f:V''\rw B$ where $V''$ is a normal projective 3-fold and $B$ is a normal projective curve. For a very general point $x\in V'$, there is a surface $V_x$ which is a pure centre of a {\it lc} pair $(V',D_x)$ with $D_x\sim_{\bQ} \lambda' A$ and $\lambda'<\frac{3\sqrt[3]{2}}{(1-\eps)\alpha_0}$ and, furthermore, $V_x$ is dominated by at least one general fiber of $f$. 
\medskip

\noindent{\bf Subcase 2.1.} Suppose $\nu$ is non-birational, which means that passing through a very general point $x\in V'$
there are at least two lc centers of the pairs $(V',D)$ and $(V', \tilde{D})$. Pick two different very general points $x_1(=x), x_2\in V'$. 
Using Todorov \cite[Lemma 3.3]{Joro}, one may find a divisor $D''\sim_{\bQ} \lambda''A$ with $\lambda''<\frac{9\sqrt[3]{2}}{(1-\eps)\alpha_0}+\eps$ such that 
$\text{Nklt}(V',D'')$ has at worst a 1-dimensional centre at $x_1=x$. This situation also fits into Takayama's induction \cite[Proposition 5.3]{Tak}. In fact, we may set 
$s_2=0$ and $t_2=\frac{9\sqrt[3]{2}}{1-\eps}+\eps\alpha_0$ in Takayama's Notation \cite[5.2(3)]{Tak} and begin the induction onwards. Then one gets a divisor $\hat{D}_3\sim_{\bQ}\hat{a}_3A$ with $\hat{a}_3<s_3+\frac{t_3}{\alpha_0}$ where{\small
\begin{eqnarray*}
&&s_3=(1+\frac{1}{1-\eps})\eps+\frac{2}{1-\eps},\\
&&t_3=(1+\frac{1}{1-\eps})(\eps\alpha_0+\frac{9\sqrt[3]{2}}{1-\eps})
\end{eqnarray*}}
such that $\text{codim}\text{Nklt}(V',\hat{D}_3)=3$ at $x_1$. Clearly one has{\small 
$$\hat{a}_3<\eps(1+\frac{1}{1-\eps})+\frac{2}{1-\eps}+
(1+\frac{1}{1-\eps})\cdot \frac{9\sqrt[3]{2}}{(1-\eps)\alpha_0}.$$
}Taking a very small $\eps>0$, one easily sees $\hat{a}_3<4$ as long as $\alpha_0>12$. Thus, by Nadel vanishing again, $\varphi_5$ is birational whenever $\vol(V)>12^3$ in this subcase.
\medskip

\noindent{\bf Subcase 2.2.} Suppose $\nu$ is birational. Without loss of generality, $V''$ can be considered smooth. We study the birationality of $|5K_{V''}|$ instead. We have a fibration $f:V''\rw B$ with $B$ a normal projective curve. By assumption a general fiber $S$ of $f$ has the property $\vol(S)<10^2.$ 

Since $\vol(V)>12^3$, we have 
$$\tau_0:=\frac{\vol(V)}{3\vol(S)}>\frac{12^3}{3\cdot 10^2}=\frac{144}{25}$$
and \cite[Lemma 2.7(1)]{Bonn} implies that
$$K_{V''}\geq (\tau_0-\eta)S$$
for any very small rational number $\eta>0$ (note here that the proof of \cite[Lemma 2.7(1)]{Bonn} works for any curve $B$). Take such a number $\eta$ so that $\tau_0-\eta>5$. Then we are in the situation of Proposition \ref{3} which says $\varphi_5$ is birational. We are done.
\end{proof}

Next we shall study the high dimensional analog.

\begin{prop}\label{4} Let $Y$ be a nonsingular projective $n$-fold ($n\geq 4$) of general type, $B$ a smooth projective curve and $f: Y\rw B$ a fibration. Denote by $X$ a general fiber of $f$. Assume $K_Y\sim_{\bQ} p_nX+E_{p_n}$ for some rational number $p_n>2\tilde{r}_{n-1}-2$ and for some effective $\bQ$-divisor $E_{p_n}$. 
Then $|mK_Y|$ gives a birational map for all $m\geq\tilde{r}_{n-1}$.
\end{prop}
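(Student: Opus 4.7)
The plan is to mimic the proof of Proposition \ref{3} step by step, with the inductive definition of $\tdr_{n-1}$ now playing the role that Lemma \ref{1} played in the three-dimensional argument. Modulo a birational modification I may assume a morphism $\pi\colon Y\to Y_0$ onto a minimal model, and choose an integer $m$ large and divisible enough that $mK_{Y_0}$ is Cartier, $|mK_{Y_0}|$ and $|mp_nX|$ are base point free, $mK_Y\sim mp_nX+mE_{p_n}$, and the union of $E_{p_n}$ with the $\pi$-exceptional divisors has simple normal crossing support. Writing $\pi^*(K_{Y_0})\sim_{\bQ}p_nX+E_{p_n}'$ for an effective $\bQ$-divisor $E_{p_n}'$ of snc support, the Chen--Zuo canonical restriction inequality \cite[Lemma 3.3, Lemma 3.4]{CAG}, which is formulated for fibrations over curves with fibers of arbitrary dimension, yields, for any small $\eps>0$,
$$\pi^*(K_{Y_0})|_X \equiv \left(\frac{p_n}{p_n+1}-\eps\right)\theta^*(K_{X_0})+H_\eps,$$
where $\theta\colon X\to X_0$ contracts $X$ to its minimal model and $H_\eps$ is an effective $\bQ$-divisor on $X$.

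Next I would repeat the Kawamata--Viehweg step of Proposition \ref{3}. Pick two distinct general fibers $X_1,X_2$ of $f$, set
$$K_{\tdr_{n-1}-1,\,p_n}:=(\tdr_{n-1}-1)\pi^*(K_{Y_0})-\frac{2}{p_n}E_{p_n}',$$
and note that $K_{\tdr_{n-1}-1,p_n}-X_1-X_2\equiv(\tdr_{n-1}-1-\tfrac{2}{p_n})\pi^*(K_{Y_0})$ is nef and big under the hypothesis $p_n>2\tdr_{n-1}-2$. Kawamata--Viehweg vanishing then yields the surjection
$$H^0\bigl(K_Y+\roundup{K_{\tdr_{n-1}-1,p_n}}\bigr)\twoheadrightarrow\bigoplus_{i=1,2}H^0\bigl(X_i,K_{X_i}+\roundup{L_{\tdr_{n-1}-1,i}}\bigr),$$
with $L_{\tdr_{n-1}-1,i}\equiv(\tdr_{n-1}-1-\tfrac{2}{p_n})\pi^*(K_{Y_0})|_{X_i}$. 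Since $|\tdr_{n-1}K_Y||_{X_i}\succeq|K_{X_i}+\roundup{L_{\tdr_{n-1}-1,i}}|$, birationality of $|\tdr_{n-1}K_Y|$ on $Y$ will follow once I establish birationality of the adjoint system on each generic $X_i$.

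To close the loop I would apply the Chen--Zuo inequality a second time, now on $X_i$, with a small $\eps_0>0$, writing
$$L_{\tdr_{n-1}-1,i}\equiv(\tdr_{n-1}-2)\theta_i^*(K_{X_{i,0}})+R_i,$$
where $R_i=\delta_0\theta_i^*(K_{X_{i,0}})+(\tdr_{n-1}-1-\tfrac{2}{p_n})H_{\eps_0,i}$ is nef and big as long as $\delta_0:=(\tdr_{n-1}-1-\tfrac{2}{p_n})(\tfrac{p_n}{p_n+1}-\eps_0)-(\tdr_{n-1}-2)>0$. For small $\eps_0$ this reduces to $p_n>\tdr_{n-1}$, which is a fortiori implied by $p_n>2\tdr_{n-1}-2$. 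The adjoint system $|K_{X_i}+\roundup{L_{\tdr_{n-1}-1,i}}|$ is then exactly of the form covered by the definition of $\tdr_{n-1}$, so it gives a birational map, finishing the proof of Proposition \ref{4}.

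The main obstacle, and really the only new verification, is the coefficient bookkeeping in the last step: one has to confirm that after shaving $\tfrac{2}{p_n}E_{p_n}'$ off to create the fractional part needed for Kawamata--Viehweg and after running the Chen--Zuo inequality on $X_i$, the leading coefficient of $\theta_i^*(K_{X_{i,0}})$ strictly exceeds $\tdr_{n-1}-2$ so that the inductive hypothesis encoded in $\tdr_{n-1}$ can be invoked. Everything else---choice of minimal model, sufficient divisibility of $m$, snc arrangement of $E_{p_n}'$, the Kawamata--Viehweg setup with two general fibers, and the concluding birationality on $Y$---transcribes directly from the proof of Proposition \ref{3}.
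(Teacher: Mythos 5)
Your overall blueprint (choose a minimal model and sufficiently divisible $m$, apply Kawamata--Viehweg to two general fibers $X_1,X_2$, then invoke the definition of $\tdr_{n-1}$ on each fiber) agrees with the paper's Steps 2 and 3. The problem is Step 1, the ``canonical restriction inequality.'' You write
$$\pi^*(K_{Y_0})|_X \equiv \Big(\frac{p_n}{p_n+1}-\eps\Big)\theta^*(K_{X_0})+H_\eps$$
and attribute this to Chen--Zuo \cite[Lemma 3.3, Lemma 3.4]{CAG}, asserting that those lemmas are ``formulated for fibrations over curves with fibers of arbitrary dimension.'' They are not: they concern 3-folds fibred over curves, where the fibers are surfaces and where the structure of minimal models and adjunction are especially well understood. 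In the higher-dimensional setting the paper does not simply cite Chen--Zuo; it re-derives a restriction inequality by splitting into the two cases $g(B)>0$ and $g(B)=0$. For $g(B)>0$ it traces through the proof of \cite[Lemma 3.4]{CAG} and observes that its key ingredient, Shokurov's rational chain connectedness, is now available in all dimensions by Hacon--McKernan, yielding $\pi^*(K_{Y_0})|_X\sim_{\bQ}\nu^*(K_{X_0})$. For $g(B)=0$ it uses the inclusion $\OO_B(p_nm)\hrw f_*\omega_Y^m$ together with Viehweg's semi-positivity to obtain
$$\pi^*(K_{Y_0})|_X\equiv\frac{p_n}{p_n+2}\nu^*(K_{X_0})+G_n,$$
i.e.\ a coefficient $\tfrac{p_n}{p_n+2}$, strictly smaller than your $\tfrac{p_n}{p_n+1}$, because $\deg\omega_{\bP^1}^{-1}=2$ and each unit of $\OO_{\bP^1}(1)$ costs roughly $\tfrac{1}{p_n}$ copies of $\omega_Y$.

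This is not just bookkeeping: the hypothesis $p_n>2\tdr_{n-1}-2$ in the proposition is calibrated exactly to the coefficient $\tfrac{p_n}{p_n+2}$. With it, $\delta_0:=(\tdr_{n-1}-1-\tfrac{2}{p_n})\tfrac{p_n}{p_n+2}-(\tdr_{n-1}-2)>0$ is equivalent to $p_n>2\tdr_{n-1}-2$. Your computation that $p_n>\tdr_{n-1}$ would already suffice is a symptom of having used an overly strong restriction inequality without justification: nothing in your argument explains why the surface-specific Chen--Zuo constant should persist when the fibers have dimension $n-1\geq 3$, and in particular you never address the $g(B)=0$ case at all. To close the gap you should either supply the case analysis and semi-positivity argument as the paper does (arriving at $\tfrac{p_n}{p_n+2}$, and then the $\delta_0$ inequality goes through exactly under $p_n>2\tdr_{n-1}-2$), or else actually prove the restriction inequality with coefficient $\tfrac{p_n}{p_n+1}$ in all dimensions, which would be a genuinely stronger statement and cannot be taken for granted.
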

\begin{proof} Take a minimal model $Y_0$ of $Y$. Modulo birational modifications over $Y$, we may assume for simplicity that there is a birational morphism $\pi:Y\rw Y_0$. Take a sufficiently large and divisible integer $m>0$ such that
\begin{itemize}
\item[1.] $mK_{Y_0}$ is a Cartier divisor, $mE_{p_n}$ is an integral divisor and $|mK_{Y_0}|$ is base point free;
\item[2.] $|p_nmX|$ is base point free;
\item[3.] $|p_nmK_{X_0}|$ is base point free where $X_0$ is a minimal model of $X$;
\item[4.] $mK_Y\sim_{\mathbb Z} mp_nX+mE_{p_n}$;
\item[5.] the support of the union of $E_{p_n}$ and all those exceptional divisors of $\pi$ is simple normal crossing.
\end{itemize}
Then one has $p_nmX\leq m\pi^*(K_{Y_0})\leq mK_Y$. 
Write $\pi^*(K_{Y_0})\sim_{\bQ}p_nX+E_{p_n}'$ where $E_{p_n}'$ is another effective $\bQ$-divisor. Pick general fibers $X$ and $X_1\neq X_2$ of $f$.
\medskip

\noindent{\it Step 1}. The canonical restriction inequality. 

If $g(B)>0$, one may take further modification $Y'$ of $Y$ (still assume $Y'=Y$) such that, for the fiber $X'$ over $X$ (still assume $X'=X$), there is a birational morphism $\nu: X\rw X_0$. Now the proof of Chen--Zuo \cite[Lemma 3.4]{CAG} implies
$$\pi^*(K_{Y_0})|_X\sim_{\bQ} \nu^*(K_{X_0}).$$
(Note that the key fact used in the proof of \cite[Lemma 3.4]{CAG} is the rational chain connectedness of Shokurov in dimension 3 and that was proved by Hacon--McKernan \cite[Corollary 1.3]{HM} for all dimensions.) 

If $g(B)=0$, we have the inclusion $\OO_B(p_nm)\hrw f_*\omega_Y^m$. Take a very large and divisible integer $l$ such that both $lK_{Y_0}$ and $lK_{X_0}$ are Cartier divisors. Then we have the inclusion
$$f_*\omega_{Y/B}^{lp_nm}\hrw f_*\omega_Y^{lp_nm+2lm}.$$
The semi-positivity theorem (see, for instance, Viehweg \cite{VS}) implies that $f_*\omega_{Y/B}^{lpm}$ is generated by global sections. Since $|\nu^*(lp_nmK_{X_0})|$ (as the moving part of $|lp_nmK_X|$) is base point free and the moving part of $|(lp_nm+2lm)K_Y|$ is exactly $|\pi^*((lp_nm+2lm)K_{Y_0})|$ thanks to the MMP, we have
$$\pi^*((lp_nm+2lm)K_{Y_0})|_X\geq \nu^*(lp_nmK_{X_0}).$$

In both cases, we have the following equality
\begin{equation}\label{e}
\pi^*(K_{Y_0})|_X\equiv \frac{p_n}{p_n+2}\nu^*(K_{X_0})+G_n
\end{equation}
for certain effective $\bQ$-divisor $G_n$ on $X$. 
\medskip

\noindent{\it Step 2}. The induction step by means of vanishing.  

Both $X_1$ and $X_2$ are known to be nonsingular projective $(n-1)$-folds of general type. Since 
$$(\tilde{r}_{n-1}-1)\pi^*(K_{Y_0})-X_1-X_2-\frac{2}{p_n}E_{p_n}'\equiv (\tdr_{n-1}-1-\frac{2}{p_n})\pi^*(K_{Y_0})$$ is nef and big by assumption, Kawamata--Viehweg vanishing gives the surjective map{\small 
\begin{equation}\label{4.1}
\begin{array}{rl}
&H^0(K_Y+\roundup{K_{n,p_n}})\\
\lrw&H^0(X_1, K_{X_1}+\roundup{K_{n,p_n}-X_1}|_{X_1})
\oplus H^0(X_2, K_{X_2}+\roundup{K_{n,p_n}-X_2}|_{X_2})
\end{array}\end{equation}
}where $K_{n,p_n}:=(\tdr_{n-1}-1)\pi^*(K_{Y_0})-\frac{2}{p_n}E_{p_n}'$.
Observing that
$$\roundup{(\tdr_{n-1}-1)\pi^*(K_{Y_0})-\frac{2}{p_n}E_{p_n}'-X_i}|_{X_i}\geq 
\roundup{L_{n,i}}$$
where{\small $$L_{n,i}:=((\tdr_{n-1}-1)\pi^*(K_{Y_0})-\frac{2}{p_n}E_{p_n}'-X_i)|_{X_i}\equiv (\tdr_{n-1}-1-\frac{2}{p_n})\pi^*(K_{Y_0})|_{X_i}$$ }for $i=1,2$,  we have{\small 
$$|\tdr_{n-1}K_Y||_{X_i}\succeq |K_Y+\roundup{(\tdr_{n-1}-1)\pi^*(K_{Y_0})-\frac{2}{p_n}E_{p_n}'}||_{X_i}\succeq |K_{X_i}+\roundup{L_{n,i}}|.$$
}Due to the surjectivity of (\ref{4.1}), it is sufficient to show that $|K_{X_i}+\roundup{L_{n,i}}|$ gives a birational map for $i=1,2$. 
\medskip

\noindent{\it Step 3}. Verification on the general fiber.

Since $p_n>2\tdr_{n-1}-2$, we have $\frac{p_n}{p_n+2}>\frac{\tdr_{n-1}-1}{\tdr_{n-1}}$ and $\delta_0:=(\tdr_{n-1}-1-\frac{2}{p_n})\frac{p_n}{p_n+2}-(\tdr_{n-1}-2)>0$. 
Now we have
$$L_{n,i}\equiv (\tdr_{n-1}-2+\delta_0)\theta_i^*(K_{X_{i,0}})+(\tdr_{n-1}-1-\frac{2}{p_n})G_{n,i}$$
for an effective $\bQ$-divisor $G_{n,i}$ on $X_i$ where $\theta_i:X_i\rw X_{i,0}$ is the contraction onto its minimal model $X_{i,0}$ for $i=1,2$.
Since 
$$L_{n,i}-(\tdr_{n-1}-1-\frac{2}{p_n})G_{n,i}\equiv (\tdr_{n-1}-2)\theta_i^*(K_{X_{i,0}})+\delta_0\theta_i^*(K_{X_{i,0}}),$$
one knows that
$$|K_{X_i}+\roundup{L_{n,i}-(\tdr_{n-1}-1-
\frac{2}{p_n})G_{n,i}}|$$
gives a birational map by the definition of $\tdr_{n-1}$ and so does $|K_{X_i}+\roundup{L_{n,i}}|$ for $i=1,2$. 

Since $X_1$ and $X_2$ are general fibers of $f$, $|\tdr_{n-1}K_Y|$ gives a birational map. So we are done.
\end{proof}

\begin{lem}\label{3+} For all $n\geq 3$, $\tdr_n<+\infty$.
\end{lem}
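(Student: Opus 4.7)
The plan is induction on $n$, with the base $n=2$ supplied by Lemma \ref{1}. Indeed, for $m\geq 5$ and any nef and big $R$ on $S'$, one rewrites
\[(m-2)\sigma^*(K_{S_0}) + R = 3\sigma^*(K_{S_0}) + Q, \qquad Q := (m-5)\sigma^*(K_{S_0}) + R,\]
with $Q$ still nef and big, so Lemma \ref{1} yields $\tdr_2\leq 5$.

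For the inductive step $n\geq 3$, I assume $\tdr_k<+\infty$ for all $2\leq k<n$ and fix the data $(X_0,\nu\colon X'\to X_0,R)$ from the definition. By the Hacon--McKernan--Takayama--Tsuji boundedness theorem the usual pluricanonical threshold $r_n$ is finite; combining this with the effective non-klt-center induction of \cite{H-M, Tak} (in the spirit of the proof of Theorem \ref{m1}), for two very general points $x_1,x_2\in X'$ one constructs an effective $\bQ$-divisor $D\sim_{\bQ} a_n\nu^*(K_{X_0})+\eta R$, with $\eta>0$ arbitrarily small and $a_n$ a constant depending only on $n$, $r_n$ and $\{\tdr_k\}_{2\leq k<n}$, such that either (a) $x_1$ is isolated in $\text{Nklt}(X',D)$ while $x_2\in\text{Nklt}(X',D)$, or (b) the minimal lc center $Z$ through $x_1$ has positive dimension and is of general type (the latter guaranteed by the boundedness step). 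Case (a) is settled by Nadel vanishing applied to $(m-2)\nu^*(K_{X_0})+R-D$, which is nef and big once $m>a_n+2$, producing the surjection of $H^0(K_{X'}+\roundup{(m-2)\nu^*(K_{X_0})+R})$ onto $\bC_{x_1}\oplus\bC_{x_2}$ and thereby separating the points.

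Case (b) is settled by passing to a log resolution $\rho\colon W\to Z_0$ onto a minimal model $Z_0$ of $Z$: Kawamata subadjunction rewrites the restriction of $|K_{X'}+\roundup{(m-2)\nu^*(K_{X_0})+R}|$ to $W$ in the form
\[\bigl|K_W+\roundup{(m'-2)\rho^*(K_{Z_0})+R_W}\bigr|,\]
with $R_W$ nef and big on $W$ and $m'$ differing from $m$ by a computable shift, so for $m$ sufficiently large the inductive hypothesis $\tdr_{\dim Z}<+\infty$ applies on $W$; Kawamata--Viehweg vanishing then lifts the separation from $W$ back to $X'$. The main obstacle is this last step: ensuring that after adjunction, tie-breaking and rounding, the restricted divisor on $W$ genuinely lands in the form $(m'-2)\rho^*(K_{Z_0})+R_W$ with $R_W$ nef and big. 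This is precisely the structural reason behind the definition of $\tdr_n$---the freedom of the nef-big term $R$ absorbs the fractional-part and adjunction contributions that would otherwise block a clean inductive invocation, so the bookkeeping, although tedious, follows the standard prescriptions in \cite{H-M, Tak}.
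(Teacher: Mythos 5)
Your proof takes a genuinely different route from the paper's, and the route has a gap at exactly the step you yourself flag as ``the main obstacle.'' The paper does not invoke the non-klt-center machinery for Lemma~\ref{3+} at all: it only uses boundedness to produce an integer $m_0$ with $P_{m_0}(X_0)\geq 2$, extracts a pencil $\Lambda\subset|m_0K_{X_0}|$, and takes the two general fibers $F_1, F_2$ of the induced fibration onto a curve after a Stein factorization. Kawamata--Viehweg vanishing then gives the restriction surjection, and the crucial comparison between $\nu_1^*(K_{X_0})|_F$ and $\sigma^*(K_{F_0})$ is supplied by the canonical restriction inequality $\nu_1^*(K_{X_0})|_F\geq\frac{1}{2m_0+1}\sigma^*(K_{F_0})$ from \cite{KIAS}, which is available precisely because $F$ is a general member of a base-point-free pencil (it rests on semipositivity for the fibration). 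That is what lets the restricted divisor land cleanly in the form $t\sigma^*(K_{F_0})+Q$ with $Q$ nef and big.

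In your Case~(b), no analogous comparison is established. Kawamata subadjunction gives $(K_{X'}+D)|_Z\sim_{\bQ}K_Z+\Delta_Z+J_Z$ with $\Delta_Z$ effective and $J_Z$ the moduli part, but this says nothing about expressing $\nu^*(K_{X_0})|_Z$ as a positive multiple of $\rho^*(K_{Z_0})$ plus something effective for a \emph{minimal model} $Z_0$ of the lc center. The claimed rewriting of the restricted system as $|K_W+\roundup{(m'-2)\rho^*(K_{Z_0})+R_W}|$ with $R_W$ nef and big is not a consequence of subadjunction and is precisely the missing ingredient; you acknowledge it but leave it to ``standard prescriptions,'' which do not in fact supply it for an arbitrary lc center. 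There are also lesser unaddressed points: the constant $a_n$ must additionally depend on the minimal volumes $e_k$ of $k$-folds of general type for $k\leq n$, the general-typeness of the positive-dimensional lc center $Z$ through a very general point needs justification rather than attribution to ``the boundedness step,'' and one must verify that $m'\to\infty$ with $m$ so the inductive hypothesis actually applies. Your base case $\tdr_2\leq 5$ via absorbing $(m-5)\sigma^*(K_{S_0})$ into $Q$ is correct and matches the paper. If you want to keep the non-klt route you would need a canonical-restriction-type inequality valid for lc centers (not just fibers of fibrations), which is not in the cited references; the paper's pencil construction is the efficient way to guarantee such an inequality.
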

\begin{proof} We have already seen $\tdr_2=5$ in Lemma \ref{1}. For $n>2$, we prove this by an induction on the dimensions. 
We sketch the proof here.

By the boundedness theorem of Hacon--McKernan, Takayama and Tsuji, we can take a positive integer $m_0$ so that $P_{m_0}(X_0)\geq 2$ for 
all minimal projective n-folds $X_0$ of general type.  Take a pencil $\Lambda\subset |m_0K_{X_0}|$. Given any birational morphism $\nu:X'\rw X_0$ and given any nef and big $\bQ$-divisor $R$ on $X'$, 
take a birational modification $\nu_1:X_1\rw X_0$ such that:
\begin{itemize}
\item[(1)] $X_1$ is smooth;
\item[(2)] $\nu_1$ factors through $\nu$, i.e. $\nu_1=\nu\circ \nu'$ and thus $X_1$ is over $X'$;
\item[(3)] the moving part of $\nu_1^*(\Lambda)$ is base point free;
\item[(4)] the support of union of exceptional divisors of $\nu_1$ and the pull back of $\{R\}$ is simple normal crossing.
\end{itemize}
Now it is sufficient to find some positive number $m$ (independent of $X_0$, the choice of $\nu$ and the $\bQ$-divisor $R$) so that
$$|K_{X_1}+\roundup{(m-2)\nu_1^*(K_{X_0})+\nu'^*(R)}|$$
gives a birational map. 

Take two generic irreducible elements $F_1$ and $F_2$ in the moving part of  $\nu_1^*(\Lambda)$. In other words, the $F_i$ is a general fiber of the induced fibration after taking the Stein factorization of $\Phi_{\Lambda}\circ \nu_1$.  By definition, $F_1$ and $F_2$ are irreducible elements in a free pencil. 
Both $F_1$ and $F_2$ are smooth by Bertini's theorem and they are of general type with dimensions $n-1$. For any integer $m\geq 2m_0+3$, since $(m-2)\nu_1^*(K_{X_0})+\nu'^*R\geq (m-2m_0-2)\nu_1^*(K_{X_0})+\nu'^*R+F_1+F_2$, Kawamata--Viehweg vanishing gives the surjective map:
\begin{equation}\label{sur}
\begin{array}{lll}
&&H^0(X_1, K_{X_1}+\roundup{(m-2m_0-2)\nu_1^*(K_{X_0})+\nu'^*R}+F_1+F_2)\\
&\lrw& \oplus_{i=1}^2 H^0(F_i, K_{F_i}+\roundup{(m-2m_0-2)\nu_1^*(K_{X_0})+\nu'^*R}|_{F_i})
\end{array}\end{equation}

Set $F=F_i$ and we study the restricted linear system on $F$. Modulo further birational modifications, we may assume that there is a birational contraction morphism $\sigma: F\rw F_0$ onto a minimal model $F_0$. By the canonical restriction inequality in Chen \cite[Lemma 2.5 and Inequality (1) at page 225]{KIAS}, we have
$$\nu_1^*(K_{X_0})|_F\geq \frac{1}{2m_0+1}\sigma^*(K_{F_0}).$$
Thus 
$$K_{F}+\roundup{q\nu_1^*(K_{X_0})+\nu'^*R}|_F\geq
K_F+\roundup{t\sigma^*(K_{F_0})+Q}$$
where $q:=m-2m_0-2$, $t:=\rounddown{\frac{q}{2m_0+1}}$ is an integer depending only on $m$ and $Q$ is a certain nef and big $\bQ$-divisor on $F$. By the induction, the statement is true on $F$ as long as $t$ is large enough and such an integer $t$ should work for all $(n-1)$-fold $F$.  The surjective map \ref{sur} says that, for some integer $m$,  the linear system 
$|K_{X_1}+\roundup{(m-2)\nu_1^*(K_{X_0})+\nu'^*(R)}|$ separates different general $F_1$ and $F_2$, and it separates general points of $F_i$ for $i=1,2$. 
We are done.
\end{proof}

With Proposition \ref{4} and Lemma \ref{3+}, we are prepared for proving Theorem \ref{m2}.  

\begin{proof}[{\bf Proof of Theorem \ref{m2}}] We sketch the proof while omitting redundant calculations.  We keep a similar procedure to the proof of Theorem \ref{m1}. 

{}First of all we know $\tdr_3\geq 27$ due to Iano-Fletcher's example $X_{46}\subset \bP(4,5,6,7,23)$ in \cite[p.151]{Fletcher}. 

Then one uses Takayama's effective induction in \cite[Proposition 5.3]{Tak} and McKernan's dominant morphism $\nu:V''\rw V'$ in \cite[Lemma 3.2]{JM} to exclusively treat 3 cases with regard to $\text{Nklt}(V',D)$, where $V'$ is certain birational model of $Y$ as in the proof of Theorem \ref{m1} replacing $V$ by $Y$:
\begin{itemize}
\item[i.] Only sub-varieties of dimension $\leq 2$ passing through very general points of $Y$ need to be considered. Note that the situation corresponding to Subcase 2.1 in the proof of Theorem \ref{m1} is included here. Meanwhile, $\nu$ is non-birational by assumption. We may use Todorov \cite[Lemma 3.3]{Joro} to find a divisor $D''\sim \lambda''A$ such that $\text{Nklt}(V',D'')$ has at worst a 1-dimensional lc center passing through $x$--a very general point of $V'$. Replace $(V',D)$ by $(V', D'')$ and go on the procedure of applying Takayama's induction.
\item[ii.] Apart from i, there is a 3-fold center, with a large volume,  of the pair $(V',D_x)$ passing through a very general point $x\in V'$.
\item[iii.] Due to McKernan \cite{JM}, $Y$ is birationally fibred by 3-folds of small volumes over a curve $B$.
\end{itemize}

For Case i and Case ii, it is easy to find the constraint on $\vol(Y)$ (say $\vol(Y)\geq \alpha_0^4>0$) so that $|27K_Y|$ gives a birational map by assuming that all those 3-folds passing through very general points of $Y$ have volume $>(3\sqrt[3]{16})^3$. In fact, the worse situation in Case i is when $\nu$ is non-birational for which we use Todorov \cite[Lemma 3.3]{Joro}. So we may set $s_2=0$, $t_2=12\sqrt[4]{2}$, while $\varepsilon\mapsto 0$, and then Takayama's induction gives, $s_4<8\sqrt{2}+2$ and 
$$t_4<24\sqrt[4]{2}(2\sqrt{2}+1).$$ 
Thus we have $a_4<s_4+\frac{t_4}{\alpha_0}$ and $\roundup{a_4}+1<27$ as long as $\alpha_0$ is large enough. Actually this is the case when $\alpha_0^4>\frac{\tdr_3-1}{6}\cdot 12^4\geq \frac{13}{3}\cdot 12^4$. Thus $|27K_Y|$ gives a birational map. In Case ii, by assuming $\alpha_1>3\sqrt[3]{4}$, Takayama's induction gives:
$$s_4<8\sqrt{2}+12(1+2\sqrt{2})\sqrt[3]{2}/\alpha_1,$$
$$t_4<8\sqrt[4]{2}(2\sqrt{2}+1)(1+3\sqrt[3]{2}/\alpha_1)$$
while letting $\varepsilon\mapsto 0$. {}Finally we have $a_4<s_4+t_4/\alpha_0$. Still under the assumption  
$\alpha_0^4>\frac{13}{3}\cdot 12^4$, it is easy to see $\roundup{a_4}+1<27$. Thus we see the birationality of $\Phi_{|27K_Y|}$. 

For Case iii, we may use Proposition \ref{4} under the condition that $Y$ is birationally fibred by 3-folds $X$ of small volume (say $\vol(X)\leq 16\times 27$). Now using a similar method to that of \cite[Lemma 2.7(1)]{Bonn} by computing the ratio $\frac{\vol(Y)}{4\vol(X)}$, it is easy for us to find the condition on $\vol{(Y)}$ again. In fact, $\vol(Y)>\frac{\tdr_3-1}{6}\cdot 12^4$ will do for this case to get the birationality of $|mK_Y|$ for all $m\geq \tdr_3$. We are done.
\end{proof}

Thanks to the elaborate calculation in Di Biagio \cite[Theorem 5.9]{DB}, we can save several pages to present here our effective result for all dimensions.  So far we have the following direct result in higher dimensions:

\begin{cor}(=Corollary \ref{m3}) There are computable constants $C_n$ for all $n\geq 5$. Let $Y$ be a nonsingular projective n-fold with $\vol(Y)>C_n$. Then $|mK_Y|$ gives a birational map onto its image for all $m\geq R_n$. 
\end{cor}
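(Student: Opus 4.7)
The proof will proceed by induction on $n$, imitating step-by-step the strategy already carried out for $n=3$ in Theorem \ref{m1} and $n=4$ in Theorem \ref{m2}. I would start from a birational model $\mu: V' \rw Y$ with a Takayama-type decomposition $\mu^*(K_Y) \sim_\bQ A + E_\eps$, where $A$ is ample with $\vol(A) \approx \vol(Y) > \alpha_0^n$. For two very general points $x_1, x_2 \in V'$ one can produce an effective $\bQ$-divisor $D \sim_\bQ \lambda A$ with $\lambda < \frac{n\sqrt[n]{2}}{(1-\eps)\alpha_0}$ such that $x_1, x_2 \in \text{Nklt}(V',D)$. The rest of the argument branches on the dimension and volume behavior of the minimal lc center of $(V',D)$ passing through a very general point.

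The first branch (``large volume'' branch) is when, at each intermediate dimension $i=n-1,n-2,\dots,1$, the lc center being cut down has volume at least $e_i^i$. Applying Takayama's effective induction \cite[Proposition 5.3]{Tak} one step at a time, and at each step possibly invoking Todorov \cite[Lemma 3.3]{Joro} in the McKernan non-birational subcase to drop dimension, one obtains an effective divisor $\hat{D}_n \sim_\bQ \hat{a}_n A$ isolating $x_1$ in its non-klt locus with
\[
\hat{a}_n \;<\; \prod_{i=1}^{n-2}\Bigl(1+\tfrac{i\sqrt[i]{2}}{e_i}\Bigr)\cdot\Bigl(\tfrac{n\sqrt[n]{2}}{(1-\eps)\alpha_0} + O(\eps)\Bigr) \cdot \text{const},
\]
after which Nadel vanishing produces the birationality of $|(\roundup{\hat{a}_n}+2)K_Y|$. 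Choosing $\alpha_0$ so that $\roundup{\hat{a}_n}+2 \le R_n$ yields a computable volume threshold $C_n^{(1)}$. The factor $2\prod_{i=1}^{n-2}(\frac{i\sqrt[i]{2}}{e_i}+1)-1$ appearing inside $R_n$ is the clean bound one naturally gets from iterating this inductive cutting process.

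The second branch (``small volume fibration'' branch) is when, after all reductions, one is left with a dominant morphism $V''\rw V'$ and a fibration $f: V'' \rw B$ over a smooth curve whose general fiber $X$ is an $(n-1)$-fold of general type with $\vol(X)$ bounded above by some explicit constant coming from the thresholds $e_{n-1}^{n-1}$ used in the first branch. Setting $\tau_0 := \frac{\vol(Y)}{n\vol(X)}$, the argument of \cite[Lemma 2.7(1)]{Bonn} (which works for any base curve $B$) gives $K_Y \genum (\tau_0 - \eta)X$ for arbitrarily small $\eta > 0$. Provided $\vol(Y)$ exceeds a computable threshold $C_n^{(2)}$ ensuring $\tau_0 > 2\tdr_{n-1}-2$, Proposition \ref{4} applies and gives the birationality of $|mK_Y|$ for all $m \ge \tdr_{n-1}$, which is $\le R_n$.

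Taking $C_n := \max\{C_n^{(1)}, C_n^{(2)}\}$ combines both branches and yields the corollary. The main obstacle I anticipate is purely bookkeeping: carefully tracking the $\eps$-perturbations and the cumulative constants through an iteration of Takayama's induction whose length depends on $n$, and arranging things so that the resulting bound for $\roundup{\hat{a}_n}+2$ matches the closed-form expression $2\prod_{i=1}^{n-2}(\frac{i\sqrt[i]{2}}{e_i}+1)-1$ that appears in the definition of $R_n$. No new geometric ideas beyond Proposition \ref{4}, Lemma \ref{3+}, \cite[Lemma 2.7(1)]{Bonn}, and the ingredients of \cite[Section 4]{Joro} and \cite[Theorem 5.9]{DB} should be required.
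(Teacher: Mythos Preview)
Your proposal is correct and follows precisely the approach the paper intends: the author explicitly omits the proof, stating it is obtained by the same argument as Theorems \ref{m1} and \ref{m2}, and points to Di Biagio \cite[Theorem 5.9]{DB} for the detailed bookkeeping of the iterated Takayama induction. Your two-branch outline (Takayama/Todorov cutting-down using the universal bounds $e_i$ versus the small-volume fibration branch handled by Proposition \ref{4} together with \cite[Lemma 2.7(1)]{Bonn}) matches exactly what those proofs do in dimensions $3$ and $4$.
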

We omit the proof since it can be obtained by a similar argument to that of Theorem \ref{m1} and Theorem \ref{m2}.
This may serve as an interesting exercise.
\medskip

{}Finally we note that $R_3=\tdr_2=r_2=5$ by Bombieri and, in fact, $R_4=\tdr_3\geq r_3$. It is natural to ask the following:
\medskip

\noindent{\bf Open Problem $X_n$.} {\em Is it always true that $R_n=\tdr_{n-1}=r_{n-1}$ for all $n\geq 4$?}
\medskip

A positive answer to Problem $X_n$ should be expectable.

\section*{\bf Acknowledgments}

I appreciate the generous support of Max-Planck-Institut f$\ddot{\text{u}}$r Mathematik (Bonn) during my four months visit there in 2011--2012. I would like to thank Christopher D. Hacon for valuable comments to the first draft of this note.  The project was supported by National Natural Science Foundation of China (\#11171068),  Doctoral Fund of Ministry of Education of China (\#20110071110003) and partially by NSFC for Innovative Research Groups (\#11121101).

\end{document}